\numberwithin{equation}{section}
\def\area{\mathrm{area}}
\def\S{{\mathbb S}}
\def\mbf#1{{\mathbf #1}}
\def\part#1#2{\bleu{\rouge{\{} \{#1\}\rouge{,}\{#2\}\rouge{\}}}}
\def\auteur#1{{\sc #1}}
\def\titreref#1{{\em #1}}
\newcommand{\Park}{\mathcal{P}}
\def\Dyck#1{\mathscr{D}_{#1}}
\newdimen\carrelength
\def\jcarre{\jaune{\linethickness{\carrelength}\line(1,0){.85}}}
\def\define#1{\bleu{\bf #1}}
\def\bleu{\textcolor{blue}}
\def\rouge{\textcolor{red}}
\def\jaune{\textcolor{yellow}}
\def\vert#1{{\color{LimeGreen} #1}}
\newtheorem{theorem}{\bleu{Theorem}}
\newtheorem*{thm}{\bleu{Theorem}}
\newtheorem{proposition}[theorem]{\bleu{Proposition}}
\def\jcarre{\put(0,0){\jaune{\linethickness{\carrelength}\line(1,0){1}}}}
\def\sud#1#2#3{\put(#1,#2){\rouge{\line(0,-1){1}}\put(-.45,-.7){\rouge{$\scriptstyle#3$}}}}
\def\est#1#2#3{\put(#1,#2){\bleu{\line(1,0){1}}\put(-.6,.15){\bleu{$\scriptstyle#3$}}}}
\begin{document} 

\title[Bounded Height]{\bleu{\large Bounded Height Interlaced Pairs of Parking Functions}}
  \author[F.~Bergeron]{Fran\c{c}ois Bergeron}
\address{D\'epartement de Math\'ematiques, UQAM,  C.P. 8888, Succ. Centre-Ville, 
 Montr\'eal,  H3C 3P8, Canada.}\date{January 2015. This work was supported by NSERC-Canada.}
 \email{bergeron.francois@uqam.ca}

\begin{abstract}
We enumerate interlaced pairs of parking functions whose underlying Dyck path has a bounded height. We obtain an explicit formula for this enumeration in the form of a quotient of analogs of Chebicheff polynomials having coefficients in the ring of symmetric functions.
\end{abstract}

\maketitle
 \parskip=0pt

{ \setcounter{tocdepth}{1}\parskip=0pt\footnotesize \tableofcontents}
\parskip=8pt  


\section*{Introduction}

The enumeration of bounded height Dyck paths is a long standing subject (at least tracing back to Kr\'ew\'eras~\cite{koshy,kreweras,deutsch}), with interesting ties to the Average Complexity Analysis of Algorithms (see~\cite{flajolet}) and Statistical Mechanics (see~\cite{rechnitzer}). One of its striking features is the fact that the corresponding generating functions, which are easily seen to be rational, are quotients of consecutive (renormalized) Chebicheff polynomials. This is a fact that seems to have been often rediscovered under different guise. 

On the other hand, there has recently been a lot of interest in parking functions. This is certainly due in part to their central role in the combinatorial study of Diagonal coinvariant $\S_n$-modules, introduced by Garsia and Haiman (see~\cite{garsia}). In the flurry of recent work on the subject (see~\cite{livre,haglund}), the notion (explicitly described in the sequel) of ``interlaced pairs of parking functions'' has recently emerged (see~\cite{aval}). In fact, families of such interlaced pairs are constructed for any given (underlying) rectangular Dyck path.

We here consider the enumeration of interlaced pairs of parking functions for which the underlying (classical) Dyck path has a bounded height. We obtain an explicit formula for this enumeration. This formula takes the form of the quotient of analogs of Chebicheff polynomials, having coefficients in the ring of symmetric functions. This allows an explicit description of the character of the corresponding $\S_n\times \S_n$-module, where the group $\S_n\times \S_n$ acts by permutation on each components of the interlaced pair. As a corollary, we obtain a formula for the enumeration of parking functions of bounded height.

\section{Parking functions}\label{sec_park}
 Recall that, for a given $n$, a \define{parking function} is sequence $\pi=\pi_1\pi_2\cdots \pi_n$ of integers $0\leq \pi_k\leq n$,
whose decreasing reordering $a_1a_2\cdots a_n$ is such that
    $$a_k\leq n-k.$$
In other words, this last sequence is a partition (in french notation) contained in the staircase shape $(n-2) (n-1)\cdots 1$. Such partitions are also known as \define{Dyck paths} (See Figure~\ref{fig1} for an example). 
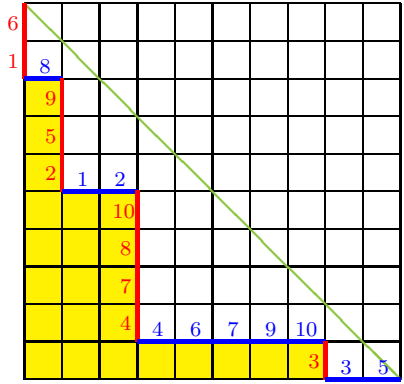
\begin{figure}[ht]
\begin{center}
\begin{picture}(10,10)(0,0)
\put(0,.5){
  \multiput(0,5)(0,1){3}{\jcarre}
  \multiput(0,0)(0,1){5}{\multiput(0,0)(1,0){3}{\jcarre}}
  \multiput(3,0)(1,0){5}{\jcarre}}
\multiput(0,0)(0,1){11}{\line(1,0){10}}
\multiput(0,0)(1,0){11}{\line(0,1){10}}
\thicklines
 \put(0,10){\vert{\line(1,-1){10}}}
  \linethickness{.5mm}
\sud{0}{10}{6}
\sud{0}{9}{1} \est{0}{8}{8}
\sud{1}{8}{9}
\sud{1}{7}{5}
 \sud{1}{6}{2}\est{1}{5}{1}\est{2}{5}{2}
\sud{3}{5}{\!\!10}
\sud{3}{4}{8}
\sud{3}{3}{7}
\sud{3}{2}{4}\est{3}{1}{4}\est{4}{1}{6}\est{5}{1}{7}\est{6}{1}{9}\est{7}{1}{\!\!10}
\sud{8}{1}{3}\est{8}{0}{3}\est{9}{0}{5}
\end{picture}\end{center}
\caption{An interlaced pair of parking functions on the Dyck path $8333311100$.}
\label{fig1}
\end{figure}

Conversely, to each given Dyck path $\alpha=a_{1} a_{2}\cdots a_{n}$, we may associate the set $\Park(\alpha)$ of \define{$\alpha$-parking function}:
\begin{displaymath}
   \bleu{\Park(\alpha):=\{a_{\sigma(1)} a_{\sigma(2)}\cdots a_{\sigma(n)} \ |\ \sigma\in\S_n\}}.
    \end{displaymath}
For $\pi\in\Park(\alpha)$, we say that $\alpha$ is the \define{shape} of $\pi$. Observe that $\alpha$-parking functions may be identified with standard Young tableaux\footnote{Naturally using french notation.} of skew shape $(\alpha+1^n)/\alpha$, where $\alpha+1^n$ is the partition having parts $a_k+1$. Simply put, $k$ sits in row $j$ of $(\alpha+1^n)/\alpha$, if and only if $\pi_k:=a_j$. In this way, an $\alpha$-parking function may be considered as a labelling of the vertical steps of the Dyck path $\alpha$.
      
By definition the symmetric group acts transitively  on $\Park(\alpha)$, by permutation of the $\pi_k$. The stabilizer of  $\alpha$ (considered as a  special case of parking-function) is clearly the Young subgroup
$\S_{r_0}\times \S_{r_1}\times \cdots\times \S_{r_k}$,
where $r_i$ denotes the number of occurrences of $i$ in $\alpha$.  It follows that the number of $\alpha$-parking function is given by the multinomial coefficient
   \begin{equation}\label{formdim}
       \bleu{\#\Park(\alpha)=  \binom{n}{r_0,r_1\ldots,r_k}}.
   \end{equation}
It also follows, by well known principles, that the Frobenius characteristic\footnote{This is the $\S_n$-character induced from the trivial character on the above Young subgroup.} of the resulting action of $\S_n$ on $\Park(\alpha)$ is given by the symmetric function product
   \begin{equation}\label{formchar}
        \bleu{\alpha(\mbf{x}):=\prod_{i=1}^k h_{r_i}(\mbf{x})}.
    \end{equation}
Hence, formula~\ref{formdim} is to be understood as giving the dimension of the corresponding $\S_n$-module. As is often done, one drops mention of the variables in symmetric functions. Thus we will now on write $h_k$ and $e_k$ respectively, for the \emph{complete homogeneous} and \emph{elementary} symmetric functions in the variables $\mbf{x}=x_1,x_2,x_3,\ldots$

\section{Bounded height parking functions} 
We are interested in studying the set of parking functions whose shape is some given Dyck path of height bounded by an integer $\eta$. Recall that the \define{height} of a path $\alpha=a_1a_2\cdots a_n$, denoted by $\eta(\alpha)$, is the maximum value of $n-k-a_k$, for $k$ running from $1$ to $n$. Our aim is to find explicit expressions for
  $$\bleu{\Park_n^{(\eta)}(\mbf{x};q) :=\sum_{\eta(\alpha)\leq \eta} q^{\area(\alpha)} \alpha(\mbf{x})},$$
 which we interpret as the Frobenius characteristic of the \define{area graded $\S_n$-module of parking functions of $\eta$-bounded-height}. Recall  that the \define{area} of a Dyck path is defined as
    $$\bleu{\area(\alpha):=\sum_{k=1}^n (n-k-\alpha_k)}.$$
In fact, we derive a formula for the generating series
       \begin{equation}
          \bleu{ \mathcal{P}^{(\eta)}(z;q):=\sum_{n=0}^\infty
                    \mathcal{P}_n^{(\eta)}(\mathbf{x};q)z^n}.
         \end{equation}
Indeed, just as is the case for the classical enumeration of bounded Dyck paths (see~\cite{flajolet}), the series $\mathcal{P}^{(\eta)}(z;q)$ takes the form of the quotient of two consecutive ``Chebicheff-like'' polynomials in $z$. The coefficients of these polynomials are themselves polynomials in the formal parameter $q$ and the elementary symmetric function $e_k$ rather that the $h_k$. This makes the formulas look simpler, but the implied expression for the $ \mathcal{P}_n^{(\eta)}$ should naturally be expanded back in terms of the $h_k$.
\begin{proposition}
 For all fixed $\eta$, as bound for the height, we have
     \begin{equation}
          \bleu{ \mathcal{P}^{(\eta)}(z;q)=\frac{\mathcal{T}_{\eta-1}(q\,z;q)}{\mathcal{T}_{\eta}(z;q)}},
         \end{equation}
where the polynomials $\mathcal{T}_{\eta}(z;q)$ are characterized by the recurrence
   $$\bleu{\mathcal{T}_{\eta+1}(z;q) = \sum_{i=0}^{\eta+2} (-1)^i q^{\binom{i}{2}} e_i \,z^i\,
                       \mathcal{T}_{\eta-i}(q^{i+1}\,z;q)},$$
with initial conditions $\mathcal{T}_{k}(z;q)=1$, for $k$ equal to $-2$ or $-1$. 
\end{proposition}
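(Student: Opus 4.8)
The plan is to realize $\mathcal{P}^{(\eta)}(z;q)$ as a corner entry of the inverse of an explicit $(\eta+1)\times(\eta+1)$ matrix, and to extract both the quotient shape and the recurrence from that matrix.

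\emph{Step 1 (a transfer matrix).} Encode a Dyck path $\alpha$ of semilength $n$ by its area sequence $(u_0,\dots,u_{n-1})$, where $u_i$ counts the lattice cells in the $i$-th column of $\alpha$ that lie strictly above the main diagonal and below $\alpha$ (set $u_{-1}=0$). Then $\area(\alpha)=\sum_i u_i$, one always has $u_{n-1}=0$ and $u_i\ge u_{i-1}-1$, and $\alpha$ has height $\le\eta$ precisely when $0\le u_i\le\eta$ for all $i$; moreover the North steps of $\alpha$ at abscissa $i$ form a maximal run of length $u_i-u_{i-1}+1$, so that $\alpha(\mathbf{x})=\prod_{i=0}^{n-1}h_{u_i-u_{i-1}+1}$. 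Consequently $\mathcal{P}^{(\eta)}(z;q)$ is the generating function of closed walks from $0$ to $0$ in the digraph on $\{0,1,\dots,\eta\}$ whose edge $j\to k$ (present exactly when $k\ge j-1$) carries the weight $z\,q^{k}h_{k-j+1}$. Writing $M_\eta$ for the associated matrix, this reads
\[
\mathcal{P}^{(\eta)}(z;q)=\sum_{n\ge0}\bigl(M_\eta^{\,n}\bigr)_{0,0}=\bigl((\Id-M_\eta)^{-1}\bigr)_{0,0},
\]
the inversion being valid because every entry of $M_\eta$ is divisible by $z$.

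\emph{Step 2 (the quotient).} By Cramer's rule $\mathcal{P}^{(\eta)}=\det\bigl((\Id-M_\eta)^{[0,0]}\bigr)/\det(\Id-M_\eta)$, where $(\Id-M_\eta)^{[0,0]}$ deletes row and column $0$. That deleted matrix, indexed by $\{1,\dots,\eta\}$, has $(j,k)$-entry $\delta_{jk}-z\,q^{k}h_{k-j+1}$; relabelling its indices by $\{0,\dots,\eta-1\}$ turns it into $\Id-M_{\eta-1}$ with $z$ replaced by $qz$, since $q^{k}=q\cdot q^{k-1}$. Hence, putting $\mathcal{T}_\eta(z;q):=\det(\Id-M_\eta)$ (with the empty determinant $\mathcal{T}_{-1}=1$ and the convention $\mathcal{T}_{-2}=1$), we obtain $\mathcal{P}^{(\eta)}(z;q)=\mathcal{T}_{\eta-1}(qz;q)/\mathcal{T}_\eta(z;q)$, which is the asserted form. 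Everything now reduces to the recurrence for the $\mathcal{T}_\eta$.

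\emph{Step 3 (two models for $\mathcal{T}_\eta$).} Expand $\det(\Id-M_\eta)=\sum_{\mathcal C}(-1)^{|\mathcal C|}\prod_{c\in\mathcal C}w(c)$ over families $\mathcal C$ of vertex-disjoint cycles (self-loops included). Since in this digraph one may descend only one step at a time, a short analysis shows that the cycles are exactly the interval cycles $a\to b\to b-1\to\cdots\to a+1\to a$ on $[a,b]\subseteq\{0,\dots,\eta\}$ (a self-loop when $a=b$), of weight $z^{\,b-a+1}q^{\,b+\binom b2-\binom a2}h_{b-a+1}$, so $\mathcal{T}_\eta$ is a signed sum over families of pairwise disjoint subintervals. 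Substituting the dual Jacobi–Trudi expansion $h_\ell=\sum_{(c_1,\dots,c_r)\models\ell}(-1)^{\ell-r}e_{c_1}\cdots e_{c_r}$ into each $h_{b-a+1}$ cuts each interval into the blocks of a composition; a telescoping computation shows the exponent $b+\binom b2-\binom a2$ is additive under this cutting, so each block $[a',b']$ receives the weight $(-1)^{b'-a'}z^{\,b'-a'+1}q^{\,b'+\binom{b'}2-\binom{a'}2}e_{b'-a'+1}$ while each original interval retains an extra factor $-1$. Summing over the ways to regroup consecutive blocks back into intervals, the identity $\sum_{(c_1,\dots,c_k)\models r}(-1)^{k}$ ($=-1$ if $r=1$, and $0$ otherwise) collapses all contributions except those in which no two blocks are adjacent, leaving
\[
\mathcal{T}_\eta(z;q)=\sum_{\{[a_j,b_j]\}_j}\ \prod_{j}(-1)^{b_j-a_j+1}\,z^{\,b_j-a_j+1}\,q^{\,b_j+\binom{b_j}2-\binom{a_j}2}\,e_{\,b_j-a_j+1},
\]
the sum being over families of pairwise disjoint and pairwise non-adjacent subintervals of $\{0,\dots,\eta\}$.

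\emph{Step 4 (the recurrence).} Apply the last model to $\mathcal{T}_{\eta+1}$ and condition on the vertex $0$. If $0$ lies in no block, deleting it shifts the remaining configuration on $\{1,\dots,\eta+1\}$ and multiplies the weight of each block of length $\ell$ by $q^{\ell}$, i.e. replaces $z$ by $qz$, producing the term $\mathcal{T}_\eta(qz;q)$. If $0$ lies in a block $[0,i-1]$ (length $i$, for $1\le i\le\eta+2$), that block contributes $(-1)^{i}q^{\binom i2}e_iz^{i}$ (using $\binom{i-1}2+(i-1)=\binom i2$), the vertex $i$ is then forced to be empty by non-adjacency, and the remaining configuration on $\{i+1,\dots,\eta+1\}$ contributes $\mathcal{T}_{\eta-i}(q^{i+1}z;q)$ after the shift by $i+1$. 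Summing over $i=0,1,\dots,\eta+2$ yields exactly the stated recurrence, and the base cases $\mathcal{T}_{-1}=\mathcal{T}_{-2}=1$ are the empty-family values. I expect the delicate point to be precisely the $h$-to-$e$ passage of Step 3 — checking the additivity of the $q$-exponents under cutting and the sign collapse of the regrouping sum; the surrounding steps are routine transfer-matrix bookkeeping.
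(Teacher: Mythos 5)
Your proposal is correct, and it takes a genuinely different route from the paper. The paper never proves the Proposition directly: it states that the result is a corollary of Theorem~\ref{theo:bounded_xy}, whose proof uses a transfer matrix $A_\eta(\mathbf{x},\mathbf{y};q)$ whose steps are whole \emph{hooks} (a run of North steps followed by a run of East steps), keeps both alphabets $\mathbf{x},\mathbf{y}$, and gets the quotient from Cramer's rule plus the observation that the $(0,0)$-minor of $\mathrm{Id}-A_\eta$ is $\mathrm{Id}-A_{\eta-1}(q\mathbf{x},\mathbf{y})$; the specialization back to a single alphabet and, more importantly, the identification of the resulting determinant with the polynomials $\mathcal{T}_\eta$ defined by the $e_i$-recurrence are left implicit. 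You instead build a single-alphabet transfer matrix whose steps are individual \emph{columns} (entries $z\,q^{k}h_{k-j+1}$), recover the quotient shape by exactly the same Cramer-plus-shifted-minor mechanism, and then — this is the part with no counterpart in the paper — prove the stated recurrence by expanding $\det(\mathrm{Id}-M_\eta)$ over disjoint cycle families, observing that the only cycles are interval cycles (correct: the largest vertex of a cycle can only step down by one, and the run-decomposition forces a single ascent), converting $h_\ell$ into elementary symmetric functions via $h_\ell=\sum_{(c_1,\dots,c_r)\models\ell}(-1)^{\ell-r}e_{c_1}\cdots e_{c_r}$, and checking that the $q$-exponent $b+\binom{b}{2}-\binom{a}{2}$ is additive under cutting and that the regrouping signs collapse by the composition identity; I verified these computations (including the exponent bookkeeping $\binom{i-1}{2}+(i-1)=\binom{i}{2}$ and the shift $z\mapsto q^{s}z$ for a shift by $s$), and they are sound, as are the base cases. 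What your approach buys is a complete, self-contained proof of the Proposition, including the explicit $e$-recurrence that the paper only asserts; what it does not give is the refined interlaced-pair statement of Theorem~\ref{theo:bounded_xy}, which is the paper's main object and from which the Proposition is meant to follow by specializing $\mathbf{y}$ to a single variable equal to $1$ (so that every $h_k(\mathbf{y})$ becomes $1$).
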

For instance, we have
\begin{eqnarray*}
    \bleu{\mathcal{T}_{{0}}(z;q)}&=&\bleu{1-e_1\,z},\\
     \bleu{\mathcal{T}_1(z;q)}&=&\bleu{1- \left( q+1 \right) e_1\,z+q\,e_{{2}}\,{z}^{2}},\\
      \bleu{\mathcal{T}_{{2}}(z;q)}&=&\bleu{1- \left( {q}^{2}+q+1 \right) e_1\,z+ \left( {q}^{2}{e_1
}^{2}+q \left( {q}^{2}+1 \right) e_{{2}} \right) {z}^{2}-{q}^{3}e_{{3}}\,
{z^{3}}}.
\end{eqnarray*}
We will see that this proposition is a corollary of Theorem~\ref{theo:bounded_xy} of the next section, where we consider the refined notion of ``interlaced pairs'' of parking function.



\section{Interlaced pairs of parking functions}
On top of labelling vertical steps of Dyck paths, we now add labels on their horizontal steps. See~\cite{aval} where this notion is introduced in a more general framework. To this end, we consider the \define{conjugate} $\alpha'$ of a Dyck path $\alpha$. This is simply the path associated to the conjugate partition in the usual sense. An \define{interlaced pair} of parking functions, on a given Dyck path $\alpha$, is simply a pair $(\rouge{\pi},\bleu{\pi'})\in\Park(\alpha)\times \Park(\alpha')$. Thus, it corresponds to independently labelling vertical steps and horizontal steps, as is illustrated in Figure~\ref{fig1}. We clearly have an action of $\rouge{\S_n}\times \bleu{\S_n}$ on the resulting set, and the corresponding character may be encoded as the product $\rouge{\alpha(\mbf{x})}\bleu{\alpha'(\mbf{y})}$. 

Any $\eta$-height-bounded path $\alpha$ is uniquely decomposed as a sequence of \define{hooks} $(\rouge{r_i},\bleu{s_i})$. These correspond to a maximal sequence of consecutive vertical steps (\rouge{$r_i$} of these), followed by maximal sequence of consecutive horizontal steps (\bleu{$s_i$} of those), so that
    $$\rouge{\alpha(\mbf{x})}\bleu{\alpha'(\mbf{y})}= \rouge{h_{r_1}(\mbf{x})}\bleu{h_{s_1}(\mbf{y})}\, \rouge{h_{r_2}(\mbf{x})}\bleu{h_{s_2}(\mbf{y})}\ \cdots\ 
              \rouge{h_{r_k}(\mbf{x})}\bleu{h_{s_k}(\mbf{y})}.$$
Reading them from top-left to bottom-right, the hooks of the path $\alpha$ in Figure~\ref{fig1} are
   $$(\rouge{2},\bleu{1}),\ (\rouge{3},\bleu{2}),\ (\rouge{4},\bleu{5}),\ (\rouge{1},\bleu{2}),$$
hence
   $$\rouge{h_\alpha(\mbf{x})}\bleu{\alpha'(\mbf{y})}=\rouge{h_{2}(\mbf{x})}\bleu{h_{1}(\mbf{y})}\, \rouge{h_{3}(\mbf{x})}\bleu{h_{2}(\mbf{y})}\, \rouge{h_{4}(\mbf{x})}\bleu{h_{5}(\mbf{y})}\, \rouge{h_{1}(\mbf{x})}\bleu{h_{2}(\mbf{y})}.$$
The \define{corners} of the path $\alpha$ are the points lying at both ends of such hooks, hence these include both extremities of the path.

Just as before, we want explicit expressions for the  area-graded bi-Frobenius of the $(\S_n\times \S_n)$-module of interlaced pairs parking functions of $\eta$-bounded-height, which is calculated/defined as
   \begin{equation}
        \bleu{\mathbb{P}_n^{(\eta)}(\mathbf{x},\mathbf{y};q):=\sum_{\eta(\alpha)\leq \eta} q^{\area(\alpha)} \alpha(\mathbf{x})\,
           \alpha'(\mathbf{y})},
  \end{equation}
where $\mathbf{y}=y_1,y_2,\ldots$ stands for another denumerable alphabet of variables, which is used for the symmetric functions that encode the second $\S_n$-action.  We then consider the generating series
       \begin{equation}
          \bleu{ \mathbb{P}^{(\eta)}(z;q):=\sum_{n=0}^\infty
                    \mathbb{P}_n^{(\eta)}(\mathbf{x},\mathbf{y};q)z^n}.
         \end{equation}
for which we have the formula of Theorem~\ref{theo:bounded_xy} below.
To state this theorem, we need to introduce the following ``transfer'' matrix, whose rows and columns are indexed by the possible heights of corners of the relevant paths. Thus, we consider the $(\eta+1)\times (\eta+1)$ matrix $A_{\eta}(\mbf{x},\mbf{y};q)$ having as entries 
	$$a_{i,j}^{(\eta)}(\mbf{x},\mbf{y}):=\begin{cases}
      \displaystyle\sum_{k=1}^{\eta+1-j} q^{\binom{k+j}{2}}h_k(\mbf{x})h_{k+j}(\mbf{y}),& \text{if}\ i=0, \\[15pt]
      \displaystyle\sum_{k=1}^{\eta+1-i} q^{\binom{k+1}{2}+(i-1)k}h_k(\mbf{x})h_{k+i}(\mbf{y}),& \text{if}\ j=0, \\[15pt]
       \displaystyle a_{i-1,j-1}^{(\eta-1)}(q\,\mbf{x},\mbf{y}),& \text{otherwise}.
\end{cases}$$
for $0\leq i,j\leq r$. Each $a_{i,j}^{(\eta)}=a_{i,j}^{(\eta)}(\mbf{x},\mbf{y})$ describes the possible ways in which one may go via a hook from a corner at height $i$ to a corner at height $j$, while respecting the height bound. The accompanying power of $q$ corresponds to the contribution to the area of that portion of the path. For example, we have
\def\r#1{\rouge{#1}}
\def\b#1{\bleu{#1}}
$$A_{2}(\mbf{x},\mbf{y};q)=\left[ \begin {array}{rrr} 
\scriptstyle 
\r{h_1(\mbf{x})} \b{h_1(\mbf{y})} +\r{h_2(\mbf{x})} \b{h_2(\mbf{y})}  q+ \r{h_3(\mbf{x})} \b{h_3(\mbf{y})}  {q}^{3}
 &\scriptstyle 
\r{h_2(\mbf{x})}  \b{h_1(\mbf{y})}  q+ \r{h_3(\mbf{x})} \b{h_2(\mbf{y})}  {q}^{3}
 &\scriptstyle 
\r{h_3(\mbf{x})} \b{h_1(\mbf{y})}  {q}^{3}
 \\ \noalign{\medskip}\scriptstyle 
 \r{h_1(\mbf{x})} \b{h_2(\mbf{y})} q + \r{h_2(\mbf{x})} \b{h_3(\mbf{y})} {q}^{3}
 &\scriptstyle 
\r{h_1(\mbf{x})}  \b{h_1(\mbf{y})} q + \r{h_2(\mbf{x})}  \b{h_2(\mbf{y})} {q}^{3}
 &\scriptstyle 
 \r{h_2(\mbf{x})}  \b{h_1(\mbf{y})} {q}^{3}
 \\ \noalign{\medskip} \scriptstyle 
 \r{h_1(\mbf{x})} \b{h_3(\mbf{y})} {q}^{2}
 &\scriptstyle 
 \r{h_1(\mbf{x})} \b{h_2(\mbf{y})} {q}^{2}
 &\scriptstyle 
\r{h_1(\mbf{x})}  \b{h_1(\mbf{y})} {q}^{2}\end {array} \right]
$$

  \begin{thm}\label{theo:bounded_xy}
    For all fixed $\eta$, as a bound for the height, we have
       \begin{equation}\label{eq:theo_xy}
          \bleu{ \mathbb{P}^{(\eta)}(z;q)=\frac{\mathbb{T}_{\eta-1}(q\,z;q)}{\mathbb{T}_{\eta}(z;q)}},
         \end{equation}
where the polynomials $\mathbb{T}_{\eta}(z;q)$ are 
      $$\bleu{\mathbb{T}_{\eta}(z;q):=\det(\mathrm{Id}_{\eta+1}-A_{\eta}(z\,\mbf{x},\mbf{y};q))}.$$
  \end{thm}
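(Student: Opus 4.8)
The plan is to recognize $A_\eta$ as an honest transfer matrix for building height‑bounded Dyck paths one hook at a time, with the corners of a path organized according to their heights, which run exactly over $\{0,1,\dots,\eta\}$. This presents $\mathbb{P}^{(\eta)}(z;q)$ as the $(0,0)$ entry of $(\mathrm{Id}_{\eta+1}-A_\eta(z\mathbf{x},\mathbf{y};q))^{-1}$, after which the closed form drops out of Cramer's rule together with the self‑similarity clause $a^{(\eta)}_{i,j}=a^{(\eta-1)}_{i-1,j-1}(q\mathbf{x},\mathbf{y})$ that is already built into the definition of $A_\eta$. Everything substantial is in the first step.

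\textbf{The combinatorial core.} Read a Dyck path $\alpha$, running from $(0,n)$ to $(n,0)$ below the anti‑diagonal $x+y=n$, as an up/down lattice path (south steps up, east steps down); the height of a lattice point $(x,y)$ on it is then $n-x-y\ge 0$, and this is what is meant by the height of a corner. Read from top‑left to bottom‑right, the hooks $(r_1,s_1),\dots,(r_k,s_k)$ of $\alpha$ join consecutive corners $C_0,\dots,C_k$ of heights $0=h_0,h_1,\dots,h_k=0$, hook $t$ climbing $r_t$ from $C_{t-1}$ to a peak of height $h_{t-1}+r_t$ and then descending $s_t$ to $C_t$, so $h_t=h_{t-1}+r_t-s_t$. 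Two facts must be checked. First, $\eta(\alpha)\le\eta$ precisely when every peak has height $\le\eta+1$, i.e. $r_t\le\eta+1-h_{t-1}$ for all $t$; hence every corner height lies in $\{0,\dots,\eta\}$, and conversely any sequence $0=h_0,\dots,h_k=0$ together with hooks obeying this bound comes from a unique such path. Second, the area decomposes over the hooks: the $\area(\alpha)$ cells between $\alpha$ and the anti‑diagonal that lie in the rows of the vertical run of hook $t$ number $h_{t-1},h_{t-1}+1,\dots,h_{t-1}+r_t-1$ from top to bottom (immediate once one expresses $n-c$, with $c$ the column of that vertical run, in terms of the peak height), so hook $t$ contributes $\binom{h_{t-1}+r_t}{2}-\binom{h_{t-1}}{2}$ to $\area(\alpha)$. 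Combining this with $\alpha(\mathbf{x})\alpha'(\mathbf{y})=\prod_t h_{r_t}(\mathbf{x})h_{s_t}(\mathbf{y})$, the weight $q^{\area(\alpha)}\alpha(\mathbf{x})\alpha'(\mathbf{y})$ is the product over $t$ of $q^{\binom{h_{t-1}+r_t}{2}-\binom{h_{t-1}}{2}}\,h_{r_t}(\mathbf{x})\,h_{s_t}(\mathbf{y})$ with $s_t=r_t+h_{t-1}-h_t$. Finally, after the substitution $\mathbf{x}\mapsto z\mathbf{x}$ (which tags each hook by $z^{r_t}$, hence the path by $z^{n}$), one verifies that summing these hook‑weights over all hooks from a corner of height $i$ to a corner of height $j$, with $1\le r\le\eta+1-i$, is exactly $a^{(\eta)}_{i,j}(z\mathbf{x},\mathbf{y})$: the clauses $i=0$, $j=0$ and the recursive interior clause of the definition match the three possibilities for $(h_{t-1},h_t)$, the exponents $\binom{k+j}{2}$ and $\binom{k+1}{2}+(i-1)k$ are the specializations of $\binom{h_{t-1}+r}{2}-\binom{h_{t-1}}{2}$ there, and $a^{(\eta)}_{i,j}=a^{(\eta-1)}_{i-1,j-1}(q\mathbf{x},\mathbf{y})$ records that, when $i,j\ge1$, lowering both corner heights by $1$ and the height bound to $\eta-1$ deletes one cell from each of the $r$ rows of the hook, dividing its $q$‑weight by $q^{r}$.

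\textbf{From the transfer matrix to the formula.} The previous paragraph is a weight‑preserving bijection between height‑$\le\eta$ Dyck paths, weighted by $q^{\area}\alpha(\mathbf{x})\alpha'(\mathbf{y})z^{n}$, and closed walks from state $0$ to state $0$ on $\{0,\dots,\eta\}$ with an edge $i\to j$ of weight $a^{(\eta)}_{i,j}(z\mathbf{x},\mathbf{y})$; since every hook carries a positive power of $z$, the sum over walks converges formally and
\[
\mathbb{P}^{(\eta)}(z;q)=\sum_{m\ge 0}\bigl[A_\eta(z\mathbf{x},\mathbf{y};q)^m\bigr]_{0,0}=\bigl[(\mathrm{Id}_{\eta+1}-A_\eta(z\mathbf{x},\mathbf{y};q))^{-1}\bigr]_{0,0}.
\]
By the cofactor formula this $(0,0)$ entry equals $\det(\mathrm{Id}_\eta-B)\big/\det(\mathrm{Id}_{\eta+1}-A_\eta(z\mathbf{x},\mathbf{y};q))$, where $B$ is $A_\eta(z\mathbf{x},\mathbf{y};q)$ with its $0$th row and column removed. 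The denominator is $\mathbb{T}_\eta(z;q)$ by definition; and the interior clause of the definition of $A_\eta$ says that, for $1\le i,j\le\eta$, the $(i,j)$ entry of $B$ is $a^{(\eta-1)}_{i-1,j-1}(qz\,\mathbf{x},\mathbf{y})$, so $B=A_{\eta-1}\bigl((qz)\mathbf{x},\mathbf{y};q\bigr)$ and $\det(\mathrm{Id}_\eta-B)=\mathbb{T}_{\eta-1}(qz;q)$. Hence $\mathbb{P}^{(\eta)}(z;q)=\mathbb{T}_{\eta-1}(qz;q)/\mathbb{T}_\eta(z;q)$, as claimed.

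\textbf{Main obstacle.} Once the combinatorial core is in place the rest is Cramer's rule plus the recursion already present in the definition of $A_\eta$, so all the real work is in that core: proving that the corners of bounded height form the right state space --- in particular that $\eta(\alpha)\le\eta$ is equivalent to the peak bound $\le\eta+1$, which is why the states stop at $\eta$ and not $\eta+1$ --- and, the more delicate point, checking that the slice of $\area(\alpha)$ carried by hook $t$ is exactly $\binom{h_{t-1}+r_t}{2}-\binom{h_{t-1}}{2}$ and that this reproduces the $q$‑exponents in $a^{(\eta)}_{i,j}$ under the correct matching of the indices $(i,j)$ with the two corner heights and of $(\mathbf{x},\mathbf{y})$ with the two step directions. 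A little extra care is also needed for degenerate hooks (empty horizontal run) and for the claim that every admissible walk does come from an honest path; both are absorbed into the summation ranges defining the $a^{(\eta)}_{i,j}$.
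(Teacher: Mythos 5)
Your proposal is correct and follows essentially the same route as the paper's own proof: decompose bounded-height paths into hooks, read $A_\eta$ as a transfer matrix on corner heights so that $\mathbb{P}^{(\eta)}(z;q)$ is the $(0,0)$ entry of $(\mathrm{Id}_{\eta+1}-A_\eta(z\mathbf{x},\mathbf{y};q))^{-1}$, and conclude by Cramer's rule together with the observation that the $(0,0)$-minor is $A_{\eta-1}(qz\,\mathbf{x},\mathbf{y};q)$. The combinatorial verifications you add (the equivalence of $\eta(\alpha)\le\eta$ with the peak bound $\le\eta+1$, the per-hook area slice $\binom{h_{t-1}+r_t}{2}-\binom{h_{t-1}}{2}$, and the careful matching of $(\mathbf{x},\mathbf{y})$ with step directions, where caution is indeed warranted since the paper's displayed $A_2$ and its entry formula differ by an $\mathbf{x}$--$\mathbf{y}$ swap in the first row) are precisely the details the paper's terse proof leaves implicit.
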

 
\begin{proof}[\bf Proof]  We first observe that the $\mbf{x}$-degree of the symmetric function involved coincides with the height of paths. Thus, $\alpha(z\,\mbf{x})\alpha'(\mbf{y})=\alpha(\mbf{x})\alpha'(\mbf{y})\,z^n$ for $\alpha$ in $\Dyck{n}$.

Now, since $\eta$-bounded-height paths decompose uniquely as sequences of hooks, and the matrix $A_{\eta}(\mbf{x},\mbf{y};q)$ describes all possible ways one may go, using a hook, from a given height to another, while respecting the height bound. it follows that the $(0,0)$-indexed entry of the matrix
    $$\mathrm{Id}_{\eta+1}+A_{\eta}+A_{\eta}^2+\ldots = (\mathrm{Id}_{\eta+1}-A_{\eta})^{-1},$$
enumerates all $\eta$-bounded-height paths. The theorem follows from Cramer's rule for this $(0,0)$-indexed entry, since the recursive definition of the matrix $A_{\eta}(\mbf{x},\mbf{y})$ makes it clear that its  $(0,0)$-minor coincides with $A_{\eta-1}(q\mbf{x},\mbf{y})$.
\end{proof}

Low degree examples of the polynomials $\mathbb{T}_{\eta}(z;q)$ are as follows
\begin{eqnarray*}
\mathbb{T}_{\eta}(z;q)&=&1-h_1(\mbf{x})h_1(\mbf{y})z\\
\mathbb{T}_{\eta}(z;q)&=&1- \left( q+1 \right) h_1(\mbf{x})h_1(\mbf{y})z+q \left( {h_1(\mbf{x})}^{2}h_1(\mbf{y})^{2}-h_2(\mbf{x})h_{{2}}(\mbf{y}) \right) {z}^{2}\end{eqnarray*}

\section{Further considerations}
As studied in~\cite{aval}, the notion of interlaced pairs of parking functions may be considered for Dyck paths in a $(m\times n)$-rectangle, with an action of the group $\S_m\times \S_n$. With an adequately defined notion of height, our considerations of bounded height may be extended to this more general framework. However, a general explicit description of the associated rational functions seems to be more complicated.


\end{document}